\newtheorem{theorem}{Theorem}[section]
\newtheorem{lemma}[theorem]{Lemma}
\theoremstyle{definition}
\theoremstyle{remark}
\numberwithin{equation}{section}
\newcommand{\abs}[1]{\lvert#1\rvert}
\def\a{{\alpha}}
\def\b{{\beta}}
\def\g{{\gamma}}
\def\e{{\epsilon}}
\def\ep{{\eta}}
\def\l{{\lambda}}
\def\s{{\sigma}}
\def\na{{\nabla}}
\def\l{{\lambda}}
\def\G{{\Gamma}}
\def\O{{\Omega}}
\def\D{{\Delta}}
\def\xxx{{{\bf x}}}
\def\R{{{\bf R}^1}}
\def\RN{{{\bf R}^N}}
\def\9{{\ \hbox{in}\ \O}}
\def\1{{\ \hbox{on}\ \G_1}}
\def\2{{\ \hbox{on}\ \G_2}}
\def\3{{\ \hbox{on}\ \G_3}}
\def\0{{\ \hbox{on}\ \G}}
\begin{document}

\title[ The exponential decay for the damped wave equation]{Remark on the exponential decay of the solutions of the damped wave equation}
\author{Giovanni Cimatti}
\address{Department of Mathematics, Largo Bruno
  Pontecorvo 5, 56127 Pisa Italy}
\email{cimatti@dm.unipi.it}


\subjclass[2010]{ 35B35}



\keywords{Damped wave equation, Gronwall's lemma, Exponential decay, Coefficient of exponential decay }

\begin{abstract}
A condition which guaranties the exponential decay of the solutions of the initial-boundary value problem for the damped wave equation is proved. A method for the effective computability of the coefficient of exponential decay is also presented.
\end{abstract}

\maketitle

\section{introduction}
Let $\O$ be a bounded and open subset of $\RN$ and $\G$ its boundary. Consider the initial-boundary value problem

\begin{equation}
\label{1_1}
u_{tt}-\D u+au_t=0\quad\hbox{in}\quad \O\times(0,\infty)
\end{equation}

\begin{equation}
\label{2_1}
u=0\quad\hbox{on}\quad \G\times[0,\infty)
\end{equation}

\begin{equation}
\label{3_1}
u(\xxx,0)=u_o(\xxx)\quad\hbox{for}\quad\xxx\in\O
\end{equation}

\begin{equation}
\label{4_1}
u_t(\xxx,0)=u_1(\xxx)\quad\hbox{for}\quad\xxx\in\O,
\end{equation}
where $a$ is a positive constant, $u_o(\xxx)$ and $u_1(\xxx)$ are given functions satisfying the usual regularity and compatibility conditions implied in the assumption we make that $u(\xxx,t)$ is a classical solution of problem (\ref{1_1})-(\ref{4_1}). P.H. Rabinowitz has proven \cite{R} that the "energy" i.e. $\int_\O\abs{\na u(\xxx,t)}^2dx$ associated with the solution of (\ref{1_1})-(\ref{4_1}) decays exponentially as $t\to\infty$. A different proof of this fact, important in applications, has been given by R. Temam \cite{T}, \cite{T1} and by R Temam and J.M. Ghidaglia in \cite{TG}. In this paper we consider the initial-boundary value problem  (\ref{1_1})-(\ref{4_1}) with the more general equation

\begin{equation}
\label{1_2}
u_{tt}-\D u+\s(\xxx,t)u_t=0\quad\hbox{in}\quad \O\times(0,\infty),
\end{equation}

where $\s(\xxx,t)$ is supposed to be continuous and positive in $\O\times (0,\infty)$. In general there is no exponential decay, as the following example shows. Let $N=1$, $\O$=$(1,2)$ and $\s(\xxx,t)=\frac{2t}{x^2-3x+2}$. The solution is now given by $u(\xxx,t)=t(x^2-3x+2)$ for which there is no exponential decay. We prove, with fully elementary means, and without using the semigroup's theory, that a sufficient condition for having the exponential decay is

 \begin{equation}
\label{1_4}
0<\s_0\leq\s(\xxx,t)\leq\s_1<\infty,
\end{equation}

where $\s_0=\inf\s(\xxx,t)$ and  $\s_1=\sup\s(\xxx,t)$. We also give an effective way to compute the decay exponent in terms of the parameters $\s_0,\s_1,\l_1$, where $\l_1$ is the first eigenvalue of the laplacian in $\O$ with zero boundary conditions.

\section{exponential decay}
Let $u(\xxx,t)$ be a regular solution of (\ref{1_2}), (\ref{2_1}), (\ref{3_1}) and (\ref{4_1}) with $\s(\xxx,t)$ satisfying (\ref{1_4}). Define

 \begin{equation}
\label{1_5}
v(\xxx,t)=u_t(\xxx,t)+\e u(\xxx,t)\quad\hbox{with}\quad 0<\e\leq\s_0.
\end{equation}

We have

 \begin{equation}
\label{3_5}
u_{tt}=v_t-\e v+\e^2u.
\end{equation}

Substituting (\ref{3_5}) in (\ref{1_2}) we obtain 

 \begin{equation}
\label{4_5}
v_t+(\s-\e)v+(\e^2-\e\s)u-\D u=0.
\end{equation}

Let us multiply (\ref{4_5}) by $v$. We arrive at

 \begin{equation}
\label{1_6}
\int_\O v_tvdx+\int_\O(\s-\e)v^2dx-\int_\O v\D udx+\int_\O(\e^2-\e\s)uvdx=0.
\end{equation}

Since $v=0$ on $\G\times [0,\infty)$, integrating by parts in (\ref{1_6}) we have

 \begin{equation}
\label{3_6}
\frac{1}{2}\frac{d}{dt}\int_\O v^2dx+\int_\O (\s-\e)v^2dx+\int_\O\na v\cdot\na u dx-\e\int_\O(\s-\e)uvdx=0.
\end{equation}

By (\ref{1_5}) $\na v=\na u_t+\e\na u$, hence from (\ref{3_6}),

 \begin{equation}
\label{1_7}
\frac{1}{2}\frac{d}{dt}\int_\O \big(\abs{\na u}^2+v^2\bigl)dx+\int_\O (\s-\e)v^2dx+\e\int_\O\abs{\na u}^2dx-\e\int_\O(\s-\e)uvdx=0.
\end{equation}

Recalling (\ref{1_5}) we have

\begin{equation}
\label{3_7}
\int_\O (\s-\e)v^2dx\geq\int_\O(\s_0-\e)v^2dx\geq 0.
\end{equation}

To control from below the last integral in (\ref{1_7}) we estimate from above $\e\int_\O(\s-\e)uvdx$. Let $\l_1>0$ be the first eigenvalue of the laplacian with zero boundary condition on $\G$. Using the Cauchy-Schwartz and Poincarè inequalities we obtain, with $\ep>0$,

\begin{equation}
\label{1_9}
\e\int_\O(\s-\e)uvdx\leq\frac{\e(\s_1-\e)}{2\ep}\int_\O\abs{v}^2dx+\frac{\e(\s_1-\e)\ep}{2\l_1}\int_\O\abs{\na u}^2dx.
\end{equation}

Changing sign in (\ref{1_9}) and substituting in (\ref{1_7}) we obtain

\begin{equation}
\label{2_9}
\frac{1}{2}\frac{d}{dt}\int_\O\bigl(\abs{\na u}^2+v^2\bigl)dx+\biggl[\e+\frac{\e(\e-\s_1)\ep}{2\l_1}\biggl]\int_\O\abs{\na u}^2dx+\biggl[\s_0-\e+\frac{\e(\e-\s_1)}{2\ep}\biggl]\int_\O\abs{v}^2dx\leq 0.
\end{equation}

Defining

\begin{equation}
\label{1_10}
f(\e,\ep,\s_1,\l_1)=\e+\frac{\e(\e-\s_1)\ep}{2\l_1},\quad g(\e,\ep,\s_0,\s_1)=\s_0-\e+\frac{\e(\e-\s_1)}{2\ep}
\end{equation}

(\ref{2_9}) becomes

\begin{equation}
\label{3_10}
\frac{1}{2}\frac{d}{dt}\int_\O\bigl(\abs{\na u}^2+v^2\bigl)dx+f(\e,\ep,\s_1,\l_1)\int_\O\abs{\na u}^2dx+g(\e,\ep,\s_0,\s_1)\int_\O\abs{v}^2dx\leq 0.
\end{equation}

We wish to find couples $(\e,\ep)$ satisfying the conditions $0<\e<\s_0$ and $0<\ep$ such that for every choice of the parameters $\s_0,\s_1,\l_1$, $f(\e,\ep,\s_,\l_1)$ and $g(\e,\ep,s_0,\s_1)$ are positive and equal. Among these couples we must find the $(\e^*,\ep^*)$ which gives to $f$ (and $g$) the greatest possible value. This will permits to apply to (\ref{3_10}) the Gronwall's inequality. The exponential decay with the best decay exponent then follows easily. Let us consider in the plane $\e$, $\ep$, ($\e>0$) the families of curves

\begin{equation}
\label{1_12}
f(\e,\ep,\s_1,\l_1)-g(\e,\ep,\s_0,\s_1)=0.
\end{equation}

Since $\e\l_1>0$, (\ref{1_12}) is equivalent to

\begin{equation}
\label{2_12}
\e(\e-\s_1)\ep^2+2\l_1(2\e-\s_0)\ep+\l_1\e(\s_1-\e)=0.
\end{equation}

Solving (\ref{2_12}) with respect to $\ep$ we obtain two branches of solutions. The one of interest to us is

\begin{equation}
\label{1_13}
\ep=\frac{\sqrt{(\s_0-2\e)^2\l_1^2+\e^2(\s_1-\e)^2\l_1}+(2\e-\s_0)\l_1}{\e(\s_1-\e)}.
\end{equation}

Inserting (\ref{1_13}) in $f(\e,\ep,\s_1,\l_1)$ (or in $g(\e,\ep,\s_0,\s_1)$) we obtain

\begin{equation}
\label{2_13}
F(\e,\s_0,\s_1,\l_1)=\frac{\s_0}{2}-\frac{\sqrt{(\s_0-2\e)^2\l_1^2+\e^2(\s_1-\e)^2\l_1}}{2\l_1}.
\end{equation}

We study

\begin{equation}
\label{2_13}
\a=F(\e,\s_0,\s_1,\l_1)
\end{equation}
as a function of $\e$ depending of the three parameters $\s_0,\s_1,\l_1$. We have

\begin{lemma}
Let $\s_1>\s_0>0$ and $\l_1>0$. If $\bar\e$ is a solution of the equation $F(\e,\s_0,\s_1,\l_1)=0$, then $\bar\e<\s_0$.
\end{lemma}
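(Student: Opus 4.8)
The plan is to reduce the equation $F(\e,\s_0,\s_1,\l_1)=0$ to a polynomial identity in $\e$ and then run a short sign argument. First I would rewrite $F(\bar\e,\s_0,\s_1,\l_1)=0$ as
\[
\s_0\l_1=\sqrt{(\s_0-2\bar\e)^2\l_1^2+\bar\e^2(\s_1-\bar\e)^2\l_1}\,;
\]
since $\s_0>0$ and $\l_1>0$ the left-hand side is positive, so squaring is an equivalence and introduces no spurious roots. Squaring and then dividing by $\l_1>0$ yields
\[
\s_0^2\l_1=(\s_0-2\bar\e)^2\l_1+\bar\e^2(\s_1-\bar\e)^2.
\]

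Next I would use the elementary factorization $\s_0^2-(\s_0-2\bar\e)^2=(2\bar\e)(2\s_0-2\bar\e)=4\bar\e(\s_0-\bar\e)$ to rearrange the last identity into
\[
4\l_1\bar\e(\s_0-\bar\e)=\bar\e^2(\s_1-\bar\e)^2.
\]
This is the form that makes the conclusion transparent: the sign of the left-hand side is governed entirely by the sign of $\bar\e(\s_0-\bar\e)$, while the right-hand side is $\bar\e^2$ times a square, hence nonnegative.

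Finally I would argue by contradiction. If $\bar\e\leq 0$ the claim $\bar\e<\s_0$ is immediate because $\s_0>0$; so assume $\bar\e>0$ and suppose, for contradiction, that $\bar\e\geq\s_0$. Then $4\l_1\bar\e(\s_0-\bar\e)\leq 0$ while $\bar\e^2(\s_1-\bar\e)^2\geq 0$, so both sides of the displayed identity must vanish. This forces $\s_0-\bar\e=0$ and $(\s_1-\bar\e)^2=0$, i.e.\ $\s_1=\bar\e=\s_0$, contradicting the hypothesis $\s_1>\s_0$. Hence $\bar\e<\s_0$. I do not anticipate a real obstacle here; the only points needing a little care are verifying that the squaring step is reversible (which holds since $\s_0\l_1>0$) and noting the trivial root $\bar\e=0$, which does satisfy $F=0$ and obeys the asserted bound.
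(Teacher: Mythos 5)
Your proof is correct and follows essentially the same route as the paper: rewrite $F(\bar\e,\s_0,\s_1,\l_1)=0$ as $\s_0\l_1=\sqrt{(\s_0-2\bar\e)^2\l_1^2+\bar\e^2(\s_1-\bar\e)^2\l_1}$, square (legitimately, since both sides are nonnegative), and derive a contradiction from $\bar\e\geq\s_0$ by a sign comparison. Your factorization $4\l_1\bar\e(\s_0-\bar\e)=\bar\e^2(\s_1-\bar\e)^2$ is a cleaner way to organize the same computation: it handles the cases $\bar\e>\s_0$ and $\bar\e=\s_0$ (which the paper treats separately) in one stroke and makes explicit exactly where the hypothesis $\s_1>\s_0$ is used.
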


\begin{proof}
By contradiction, suppose $\bar\e=\s_0+\g^2$, $\g\neq 0$ is a solution. We have

\begin{equation}
\label{2_14}
\s_0^2\l_1^2=(-\s_0-2\g^2)^2\l_1^2+(\s_0+\g^2)^2(\s_1-\s_0-\g^2)^2\l_1>(\s_0+2\g^2)^2\l_1^2>\s_0^2\l_1^2.
\end{equation}
On the other hand, if $\bar\e=\s_0$, we have

\begin{equation}
\s_0^2\l_1^2=\s_0\l_1^2+\s_0^2(\s_1-\s_0)^2\l_1>\s_0^2\l_1^2.
\end{equation}
\end{proof}

For every values of the parameters $\e=0$ is a solution of $F(\e,\s_0,\s_1,\l_1)=0$. Moreover, $\lim_{\e\to \pm\infty}F(\e,\s_0,\s_1,\l_1)=-\infty$ and

\begin{equation}
\label{1_15}
F'(\e,\s_0,\s_1,\l_1)=\frac{2(\s_0-2\e)\l_1-\e(\s_1-\e)^2+\e^2(\s_1-\e)}{2\sqrt{(\s_0-2\e)^2\l_1^2+\e^2(\s_1-\e)^2\l_1}}.
\end{equation}

We have $F'(0,\s_0,\s_1,\l_1)=1$ for every value of the parameters $\s_0$, $\s_1$ and $\l_1$. Therefore, in a small interval $(0,\b)$, $\b>0$ $F$ is positive for every value of the parameters. To study completely $F'$ we make the following elementary discussion. The cubic expression

\begin{equation}
\label{1_16}
-2\e^3+3\s_1\e^2-(4\l_1+\s_1^2)\e+2\l_1\s_0
\end{equation}

has the same sign of $F'(\e,\s_0,\s_1,\l_1)$. On the other hand, $D=3\s_1^2-24\l_1$ is the discriminant of the derivative of (\ref{1_16})  Thus, if $D<0$, (\ref{1_15}) is always strictly decreasing. As a consequence, $F'(\e,\s_0,\s_1,\l_1)$ vanishes in exactly one point $\bar\e$ and $0<\bar\e$. This implies that $F(\e,\s_0,\s_1,\l_1)$, which always vanishes for $\e=0$ and is positive immediately to the right of $\e=0$, has a positive absolute maximum in $\e^*$ and vanishes in $\e=\bar\e>\bar\e^*$. If $D=0$ a bifurcation occurs, and, when $D>0$, $F(\e,\s_0,\s_1,\l_1)$ has two relative maxima's and one relative minimum. The maximum immediately to the right of $\e=0$ is certainly positive, the second maximum may or may not be positive. Let $\e^*$ be the point of absolute maximum of $F(\e,\s_0,\s_1)$. Define

\begin{equation}
\label{1_19}
\a^*=F(\e^*,\s_0,\s_1,\l_1)
\end{equation}

and

\begin{equation}
\label{2_19}
\ep^*=\frac{\sqrt{(\s_0-2\e^*)^2\l_1^2+\e^{*2}(\s_1-\e^*)^2\l_1}+(2\e^*-\s_0)\l_1}{\e^*(s_1-\e^*)}.
\end{equation}

With this choice of $\e$ and $\ep$ we have $f(\e^*,\ep^*,\s_1,\l_1)=g(\e^*,\ep^*,\s_0,\s_1)=\a^*>0$. Therefore, (\ref{3_10}) becomes

\begin{equation}
\label{2_20}
\frac{1}{2}\frac{d}{dt}\int_\O\bigl(\abs{\na u}^2+v^2\bigl)dx+\a^*\int_\O(\abs{\na u}^2+v^2)dx\leq 0.
\end{equation}

Using the Gronwall's inequality, we obtain

\begin{equation}
\label{3_20}
\int_\O\bigl(\abs{\na u}^2+v^2\bigl)dx\leq\int_\O(\abs{\na u(\xxx,0)}^2+\abs{v(\xxx,t)}^2)dx\ e^{-2\a^* t}.
\end{equation}

The right hand side of (\ref{3_20}) can be computed in terms of the initial and boundary data satisfied by $u(\xxx,t)$. For, from (\ref{3_1}) we obtain $\na u(\xxx,0)=\na u_0(\xxx)$. Moreover, since $v(\xxx,t)=u_t(\xxx,t)+\e^*u(\xxx,t)$, we have $v(\xxx,0)=u_1(\xxx)+\e^*u_0(\xxx)$. Hence

\begin{equation*}
\int_\O\abs{\na u}^2dx\leq\int_\O\abs{\na u_0(\xxx)}^2+\abs{u_1(\xxx)+\e^*u_0(\xxx)}^2dx\ e^{-2\a^*t}.
\end{equation*}

This proves the exponential decay.
\bigskip

\noindent{\bf Remark}. For the regular solutions of the non-linear equation

\begin{equation}
\label{1_21}
u_{tt}-\D u+m(u)u_t=0\quad\hbox{in}\quad \O\times(0,\infty)
\end{equation}
the exponential decay can be obtained, with minor changes, if we keep the initial and boundary conditions (\ref{2_1}), (\ref{3_1}) and (\ref{4_1}) and assume $m\in C^0(\R)$ and $0<m_0\leq m(u)\leq m_1$. To see that, we simply define $\s(\xxx,t)=m(u(\xxx,t))$ where $u(\xxx,t)$ is the regular solution of the non linear problem 
(\ref{1_21}), (\ref{2_1}), (\ref{3_1}) and (\ref{4_1})

\bibliographystyle{amsplain}

\end{document}